\newfont{\msbm}{msbm10 at 11pt}
\newfont{\msbmsm}{msbm10 at 8pt}
\newtheorem{Theo}{Theorem}
\newtheorem{Prop}[Theo]{Proposition}
\begin{document}
\title{On the time to absorption in $\Lambda$-coalescents}

\author{G\"otz Kersting\thanks{Institut f\"ur Mathematik, Goethe Universit\"at, Frankfurt am Main, Germany \newline kersting@math.uni-frankfurt.de, wakolbinger@math.uni-frankfurt.de \newline Work partially supported by the DFG Priority Programme SPP 1590 ``Probabilistic Structures in Evolution''}$\ $ and Anton Wakolbinger$^*$}
\date{}
\maketitle
\begin{abstract}
We present a law of large numbers and a central limit theorem for the time to absorption of $\Lambda$-coalescents, started from $n$ blocks, as $n \to \infty$. The proofs rely on an approximation of the logarithm of the block-counting process of $\Lambda$-coalescents with a dust component by means of a drifted subordinator.\\
{\em AMS 2010 subject classification:} 60J75 (primary), 60J27, 60F05 (secondary)$^{\color{white} \big|}$\\
{\em Keywords:} coalescents, time to absorption, law of large numbers, central limit
theorem, subordinator with drift
\end{abstract}

\section{Introduction and main results}
How long does it take for the ancestral lineages of a large sample of individuals back to its common ancestor? 
For population of constant size this turns into a question on the absorption time of a coalescents, which describes the genealogical tree of $n$ individuals by means of merging partitions. Here we consider coalescent with multiple mergers, also known as $\Lambda$-coalescents, which were introduced in 1999 by Pitman \cite{Pi} and Sagitov \cite{Sa}. If $\Lambda$ is a finite, non-zero measure on $[0,1]$, then the $\Lambda$-coalescent started with $n$ blocks is a continuous-time Markov chain $(\Pi_n(t), t \geq 0)$ taking its values in the set of partitions of $\{1, \dots, n\}$.  It has the property that whenever there are $b$ blocks, each possible transition that involves merging $k \geq 2$ of the blocks into a single block happens at rate
\[
\lambda_{b,k} = \int_{[0,1]} p^{k} (1-p)^{b-k} \: \frac{\Lambda(dp)}{p^2}\ ,
\]
and these are the only possible transitions.  
Let $N_n(t)$ be the number of blocks in the partition $\Pi_n(t)$, $t \ge 0$.  Then 
\[\tau_n := \inf\{t\ge 0: N_n(t) = 1\}\] is the time of the last merger, also called the {\em absorption time} of the coalescent started in $n$ blocks.   We will investigate the asymptotic distribution of $\tau_n$ as $n \to \infty$.

Our first result is a law of large numbers for the  times $\tau_n$. Let 
\[ \mu:=\int_{[0,1]} \log \frac 1{1-p} \, \frac {\Lambda(dp)}{p^2} \ , \]
in particular $\mu=\infty$ in case of $\Lambda(\{0\})>0$ or $\Lambda(\{1\})>0$.

\begin{Theo}\label{LLN} For any $\Lambda$-coalescent,
\begin{align} \frac{\tau_n}{\log n} \to \frac 1\mu \ . \label{lln}
\end{align}
in probability as $n \to \infty$.
\end{Theo}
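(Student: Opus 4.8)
The plan is to track $\log N_n(t)$ and to show that, up to a negligible terminal phase, it decreases like a subordinator run at speed $\mu$. Assume first $\mu<\infty$ (which forces the dust condition $\int_{(0,1]}\frac{\Lambda(dp)}p<\infty$, since near $0$ the integrand of $\mu$ is comparable to $\Lambda/p$). Realize the coalescent through its Poisson construction: a Poisson point process $\Pi$ on $(0,\infty)\times(0,1]$ with intensity $dt\,\frac{\Lambda(dp)}{p^2}$, where an atom $(t,p)$ marks each present block independently with probability $p$ and merges the marked blocks when at least two are marked. On the same points define the subordinator $S_t:=\sum_{(s,p)\in\Pi,\,s\le t}\log\frac1{1-p}$, whose Lévy measure is the image of $\frac{\Lambda(dp)}{p^2}$ under $p\mapsto\log\frac1{1-p}$ and whose mean is $E[S_1]=\mu<\infty$ (the dust condition is exactly what makes this sum converge). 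Writing $\rho(x):=\inf\{t:S_t\ge x\}$, the strong law for subordinators gives $S_t/t\to\mu$ and hence $\rho(x)/x\to1/\mu$ as $x\to\infty$; with $x=\log n$ this already yields the target constant, and crucially it uses only that the mean $\mu$ is finite, not any higher moment.

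Next I would identify $\mu$ as the true speed of descent by a generator computation. For $f(b)=\log b$ the generator of $N_n$ gives the infinitesimal mean decrease
\[
\psi(b):=-\mathcal L f(b)=\sum_{k=2}^{b}\binom bk\lambda_{b,k}\,\log\frac{b}{b-k+1}=\int_{[0,1]}E\Big[\log\tfrac{b}{b-K_b+1}\,;\,K_b\ge2\Big]\frac{\Lambda(dp)}{p^2},
\]
where $K_b\sim\mathrm{Bin}(b,p)$. For fixed $p\in(0,1)$ the law of large numbers gives $K_b/b\to p$, so the bracket tends to $\log\frac1{1-p}$, and dominated convergence yields $\psi(b)\to\mu$ as $b\to\infty$. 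This is precisely the statement that, while the number of blocks is large, a reproduction event with parameter $p$ multiplies $N_n$ by approximately $1-p$ and hence lowers $\log N_n$ by approximately $\log\frac1{1-p}$, matching the jumps of $S$.

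Then I would run a two-phase argument. In Phase 1, fix a large constant $K$ and let $\sigma_K=\inf\{t:N_n(t)\le K\}$; on $[0,\sigma_K)$ one has $N_n>K$, so $\psi(N_n)\in[\mu-\eps_K,\mu+\eps_K]$ with $\eps_K\to0$, and feeding this into the martingale $\log N_n(t)+\int_0^t\psi(N_n(s))\,ds$ together with control of the martingale part shows $\sigma_K/\log n\to1/\mu$ in probability. In Phase 2, the Markov property and monotonicity in the number of blocks bound $\tau_n-\sigma_K$ stochastically by the absorption time of a $\Lambda$-coalescent started from $K$ blocks, which has finite mean and is thus $O_P(1)$, hence negligible after dividing by $\log n$. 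The main obstacle is the fluctuation control in Phase 1: near $p=1$ the jumps $\log\frac1{1-p}$ may fail to be square-integrable even when $\mu<\infty$, so a naive quadratic-variation bound on the martingale breaks down. This is exactly why the argument is routed through the coupling with $S$: matching the increments of $\log n-\log N_n$ to the jumps of the subordinator lets one invoke the subordinator law of large numbers, which needs only the finite mean $\mu$, while the coupling error comes only from the binomial fluctuation $K_b-bp$ (of relative order $b^{-1/2}$, hence small while $N_n$ is large) together with the short terminal phase.

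Finally, the case $\mu=\infty$, where $1/\mu=0$, I would treat by monotonicity in $\Lambda$: if $\Lambda'\le\Lambda$ as measures then all rates decrease, so $\tau_n^{\Lambda}\le\tau_n^{\Lambda'}$ stochastically. When the divergence of $\mu$ comes from the integral over $(0,1)$, taking $\Lambda'=\Lambda|_{[\delta,1-\delta]}$ gives a finite $\mu'$ that can be made arbitrarily large as $\delta\downarrow0$; Phase 1 applied to $\Lambda'$ yields $\limsup_n\tau_n^{\Lambda}/\log n\le1/\mu'\to0$ in probability. An atom $\Lambda(\{1\})>0$ forces total mergers at positive rate, so $\tau_n=O_P(1)$, and an atom $\Lambda(\{0\})>0$ dominates a Kingman component whose absorption time from $n$ blocks has bounded mean, so again $\tau_n=O_P(1)$. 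In every subcase $\tau_n/\log n\to0=1/\mu$, completing the law of large numbers.
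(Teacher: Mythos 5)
Your overall architecture is the paper's: approximate $\log N_n$ by the subordinator $S$ built from the same Poisson points, invoke the subordinator law of large numbers (which needs only $\mu<\infty$), dispose of the terminal phase separately, and handle $\mu=\infty$ by restricting $\Lambda$ to $[\delta,1-\delta]$ together with a monotone coupling and the atoms at $0$ and $1$ (that last part coincides with the paper's argument and is fine). The genuine gap is in the step you yourself flag and then wave away: the uniform-in-$t$ control of $|\log N_n(t)-(\log n-S_t)|$ up to the time $N_n$ reaches level $K$. You correctly observe that the martingale/quadratic-variation route can fail when $\sigma^2=\infty$, and you then ``route the argument through the coupling with $S$,'' asserting that the coupling error ``comes only from the binomial fluctuation $K_b-bp$, of relative order $b^{-1/2}$.'' That assertion is where the proof is missing. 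Besides the fluctuation there is a systematic bias at each merger: the block count drops to $b-K_b+1$ rather than to $b(1-p)$, an error of order $1/(b(1-p))$ per event; under the dust condition the rate of effective merger events can be of order $b$ per unit time, so these biases accumulate to a drift of order $f(\log b)$ per unit time, whose integral over the horizon $\log n/\mu$ need not be $O(1)$ (in the paper's Example with $1<\gamma\le 3/2$ it is of order $(\log n)^{2-\gamma}$). This is precisely why the paper approximates $\log N_n$ by the \emph{drifted} subordinator $Y_n$ of \eqref{SDE} and imports the nontrivial estimate $\sup_{t<\tau_n}|\log N_n(t)-Y_n(t)|=O_P(1)$ from \cite{Ke}, strengthened here as Proposition \ref{Prop5}. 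For the LLN the accumulated drift is $o(\log n)$ and does not move the constant $1/\mu$, but some uniform bound of the form $\sup_{t<\tau_n}|\log N_n(t)-(\log n-S_t)|=o_P(\log n)$ (or the $O_P(1)$ bound against $Y_n$) must actually be established, and neither your binomial heuristic nor your abandoned martingale computation supplies it.

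To locate the gap more precisely: the lower bound $\tau_n\ge(1-\eta)\log n/\mu$ does admit an elementary proof, since the number $D_n(t)$ of never-marked singletons satisfies $\mathbf E[D_n(t)\mid S]=ne^{-S_t}$ with conditional variance at most its conditional mean, whence $N_n(t)\ge D_n(t)\ge 2$ with high probability while $S_t<(1-\eta)\log n$. It is the upper bound --- showing that the ``$+1$'' restored at each merger does not slow the descent of $\log N_n$ below speed $\mu$ over the whole time horizon --- that carries the real content of Theorem \ref{LLN}, and exactly that direction is left unproved in your proposal (your Phase 1 claim ``$\sigma_K/\log n\to 1/\mu$'' presupposes it). Your Phase 2 and the $\mu=\infty$ case are correct as stated.
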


This theorem says that in a $\Lambda$-coalescent the number of blocks decays at least at an exponential rate. If $\mu=\infty$, then the right-hand limit is 0, and the coalescent  decreases even super-exponentially fast. The case $\mu<\infty$ is equivalently captured by the simultaneous validity of the  conditions 
\[ \int_{[0,1]}\frac{\Lambda(dp)}p < \infty \quad \text{ and }\quad \int_{[0,1]} \log \frac 1{1-p} \, \Lambda (dp) < \infty \ . \]
The first one is a requirement on $\Lambda$ in the vicinity of 0, it prohibits  a swarm of small mergers (as they occur in coalescents coming down from infinity, meaning that the $\tau_n$ are bounded in probability uniformly in $n$). The second is a condition on $\Lambda$ in the vicinity of 1. It rules out the possibility of mergers which, although appearing only every now and then,  are so vast that they make the coalescent collapse. -- A counterpart to Theorem \ref{LLN}, with $\tau_n$  in \eqref{lln} replaced by its expectation, was obtained by Herriger and M\"ohle \cite{HeMoe}.

Our second result is a central limit theorem. Here we confine ourselves to coalescents with $\mu<\infty$. Then the function 
\begin{equation} \label{ourf} f(y):= \int_{[0,1]} \frac{1-(1-p)^{e^y} }{e^y} \, \frac{\Lambda(dp)}{p^2} \ , \ y \in \mathbb R 
\end{equation} 
is everywhere finite. Also $f$ is a positive, monotone decreasing, continuous function with the property $f(y)\to 0$ for $y \to \infty$. Let
\[ b_n := \int_\kappa^{\log n} \frac{dy}{\mu- f(y)} \ , \]
where we choose $\kappa \ge 0$ such that
\[ f(y) \le \frac \mu 2 \text{ for all } y \ge \kappa \ . \]

\begin{Theo}\label{CLT} Assume that $\mu< \infty$ and moreover
\[ \sigma^2 := \int_{[0,1]} \Big(\log \frac 1{1-p}\Big)^2\, \frac{\Lambda(dp)}{p^2} < \infty \ . \]
Then
\begin{align}\label{celith}
 \frac{\tau_n - b_n}{\sqrt{\log n}}\ \stackrel d \to\ N(0, \sigma^2/\mu^3) 
\end{align}
 as $n \to \infty$.
\end{Theo}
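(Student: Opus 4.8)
\section*{Proof proposal}

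The plan is to express $\tau_n$ as a first-passage time and to approximate the driving process by a subordinator built from the same Poisson noise. Recall the Poissonian construction of the $\Lambda$-coalescent: there is a Poisson point process $(t_i,p_i)$ on $[0,\infty)\times[0,1]$ with intensity $dt\,\Lambda(dp)/p^2$, and at a point $(t_i,p_i)$ each current block is marked independently with probability $p_i$; if $K_i\ge 2$ blocks are marked they merge into one. If $b=N_n(t_i-)$ blocks are present, such an event changes $-\log N_n$ by $\log\frac{b}{b-K_i+1}$ when $K_i\ge 2$ and by $0$ otherwise, where $K_i\sim\mathrm{Bin}(b,p_i)$. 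Since $b$ is large throughout the bulk of the coalescent, this increment is close to the idealized value $\log\frac1{1-p_i}$ obtained by replacing $N\mapsto N(1-p)$. I therefore introduce, on the same space, the subordinator
\[ \Sigma_t:=\sum_{t_i\le t}\log\frac1{1-p_i}, \]
whose L\'evy measure is the image of $\Lambda(dp)/p^2$ under $p\mapsto\log\frac1{1-p}$; by definition of $\mu$ and $\sigma^2$ one has $\mathbb E[\Sigma_t]=\mu t$ and $\Var[\Sigma_t]=\sigma^2 t<\infty$. Writing $Y_n(t):=\log n-\log N_n(t)$, the absorption time is the first-passage time $\tau_n=\inf\{t:Y_n(t)=\log n\}$, and the goal is a representation
\[ Y_n(t)=\Sigma_t-\int_0^t f(\log N_n(s))\,ds+\text{(error)}. \]

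Why does the function $f$ appear, and why only in the centering? Compensating the jumps of $-\log N_n$, its drift rate at $N_n=b$ is $\int\mathbb E[\log\frac{b}{b-K+1}\mathbf 1_{\{K\ge2\}}]\,\Lambda(dp)/p^2$ with $K\sim\mathrm{Bin}(b,p)$; an expansion in $1/b$ shows this equals $\mu-f(\log b)$ up to a term of order $1/b$, which is summable over the life of the coalescent and hence alters the centering by only $O(1)$. By Theorem \ref{LLN} the trajectory $s\mapsto\log N_n(s)$ stays close to the deterministic curve solving $\dot y=-(\mu-f(y))$, so the change of variables $y=\log N_n(s)$ turns $\int_0^{\tau_n}(\mu-f(\log N_n))\,ds$ into $\int_0^{\log n}\frac{dy}{\mu-f(y)}=b_n+O(1)$; this is the source of the centering $b_n$. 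Crucially $f(y)\to 0$ as $y\to\infty$, so along the bulk of the path (say until $N_n$ first drops to a large constant $C$) the drift of $Y_n$ equals $\mu$ up to a correction that is uniformly at most $f(\log C)$, which can be made arbitrarily small by taking $C$ large. The terminal stretch from $C$ blocks down to $1$ takes only $O_{\mathbb P}(1)$ time and contributes $O_{\mathbb P}(1)$ to $\tau_n$; it is therefore negligible on the scale $\sqrt{\log n}$.

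On the bulk the problem thus reduces to a first-passage central limit theorem for the subordinator. Since $\Sigma$ has i.i.d.\ increments with mean $\mu$ and finite variance $\sigma^2$, the ordinary CLT gives $(\Sigma_t-\mu t)/\sqrt t\Rightarrow N(0,\sigma^2)$, and inverting, the first-passage time $\vartheta(x)=\inf\{t:\Sigma_t>x\}$ satisfies $(\vartheta(x)-x/\mu)/\sqrt x\Rightarrow N(0,\sigma^2/\mu^3)$; the variance is read off from the linearization $\vartheta(x)-x/\mu\approx-\mu^{-1}(\Sigma_{x/\mu}-\mu\cdot x/\mu)$ together with $\Var[\Sigma_{x/\mu}]=\sigma^2x/\mu$. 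Because the $f$-correction is, by the previous paragraph, an almost deterministic drift that merely moves the centering from $\log n/\mu$ to $b_n$ without inflating the fluctuations, the effective drift governing the passage is $\mu$, which is exactly why the limiting variance is $\sigma^2/\mu^3$ rather than something involving $f$. Passing $n\to\infty$ and then $C\to\infty$ combines the bulk CLT with the $O_{\mathbb P}(1)$ terminal contribution to yield $(\tau_n-b_n)/\sqrt{\log n}\Rightarrow N(0,\sigma^2/\mu^3)$, which is \eqref{celith}.

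The main obstacle is the error control behind the representation in the first paragraph: one must show that the difference between the true increments $\log\frac{b}{b-K+1}\mathbf 1_{\{K\ge2\}}$ and the idealized increments $\log\frac1{1-p}$ --- arising from the binomial fluctuation of $K$, the ``$+1$'', and the null events $K\le 1$ --- has a predictable compensator matching $f(\log b)$ up to summable errors, and a fluctuation part whose quadratic variation is $o(\log n)$, so that it does not disturb the Gaussian limit. At the same time the quadratic variation of the martingale part of $Y_n$ must be shown to be $\sigma^2\tau_n(1+o(1))$, and here the hypothesis $\sigma^2<\infty$ is exactly what makes $\Sigma$ an $L^2$-subordinator and supplies the Lindeberg/Lyapunov bound for the martingale CLT. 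Establishing these estimates uniformly down to the level $C$, together with the uniform $O_{\mathbb P}(1)$ bound on the terminal phase, is the technical heart of the argument.
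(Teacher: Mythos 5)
Your overall strategy coincides with the paper's: couple $\log N_n$ to the subordinator $S_t=\sum_{t_i\le t}\log\frac1{1-p_i}$ with the drift correction $f$, deduce the theorem from a first-passage CLT for the resulting drifted subordinator, obtain the centering $b_n=\int_\kappa^{\log n}\frac{dy}{\mu-f(y)}$ from the change of variables along the flow $\dot y=-(\mu-f(y))$, and read off the variance $\sigma^2/\mu^3$ from the linearization of the inverse. The treatment of the terminal phase is also essentially the paper's (there it is packaged as the sandwich $T^{\log n}_{\ell}\le\tau_n\le T^{\log n}_{-\ell}$ on a high-probability event via Proposition \ref{Prop5}, which in addition requires a separate Poisson computation controlling the size $\Delta_n$ of the last merger against $\log N_n(\tau_n-)$ --- that overshoot control is not automatic from ``$O_{\mathbb P}(1)$ time below level $C$'').

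The genuine gap is in the step you yourself call the technical heart, and the mechanism you offer does not close it. You argue that on the bulk ``the drift of $Y_n$ equals $\mu$ up to a correction that is uniformly at most $f(\log C)$, which can be made arbitrarily small by taking $C$ large.'' But a drift perturbation of constant size $\varepsilon=f(\log C)$ acting over a time horizon of order $\log n/\mu$ shifts the passage time by order $\varepsilon\log n$, which dominates $\sqrt{\log n}$ for every fixed $C$. Uniform smallness of $f$ is precisely the reason the centering must be $b_n$ rather than $\log n/\mu$; it says nothing about the \emph{fluctuation} of $\int_0^{t}f(\log N_n(s))\,ds$ around its deterministic counterpart, and it is this fluctuation that must be shown to be $o_P(\sqrt{\log n})$ so that the limit variance involves only $\sigma^2$ and $\mu$. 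The paper's Proposition \ref{Prop4} supplies the needed estimate: with $\rho^z$ the solution of the time-reversed ODE and $M_t=\sup_{u\le t}|S_u-\mu u|$, a sign/telescoping argument using the monotonicity of $f$ first gives $|Y^z_t-\rho^z_t|\le 2M_t$, and then
\[
\Big|\int_0^t f(Y^z_s)\,ds-\int_0^t f(\rho^z_s)\,ds\Big|\le 2M_t\, f(\rho^z_t-2M_t)\ ,
\]
which is $o_P(\sqrt z)$ because $M_t=O_P(\sqrt t)$ while the argument of $f$ tends to infinity, provided one stops $c\sqrt z$ before the end of the passage (the discarded piece contributes only $\int_{\beta_z-c\sqrt z}^{\beta_z}f(\rho^z_s)\,ds=o(\sqrt z)$). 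The point is that the bound on the drift discrepancy must \emph{decay along the path} through the product structure $M_t\cdot f(\cdot)$, not merely be uniformly small; your martingale/compensator formulation could be completed, but only with an estimate of this strength, which the proposal asserts rather than provides. (The matching of the compensator with $\mu-f(\log b)$ up to summable errors is likewise nontrivial and is the content of Theorem 10 of \cite{Ke}, strengthened here in Proposition \ref{Prop5}.)
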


\newpage 

Under the additional condition 
\begin{align}\label{integral}
\int_{[0,1]} \log \frac 1p \, \frac{\Lambda(dp)}{p} < \infty \ .
\end{align}
the CLT \eqref{celith} has been obtained by Gnedin, Iksanov and Marynych \cite{Gne}, with $b_n $ replaced by $\log n/\mu$. (Their condition (9) is equivalent to the above condition \eqref{integral}, see  Remark 13 in \cite{Ke}).  Thus the question arises, whether the simplified centering by $\log n/\mu$ is always feasible. The next proposition shows that this can be done under a condition that is weaker than \eqref{integral}, but not in any case.

\begin{Prop} \label{Prop2}
Let $0\le c < \infty$. Then 
\begin{align} \label{condi2} b_n= \frac{\log n}{\mu}+\frac {2c} {\mu^2}\sqrt {\log n} + o(\sqrt {\log n})
\end{align}
as $n \to \infty$, if and only if
\begin{align}\label{condi} \sqrt{\log \tfrac 1r}\int_{[0,r]} \frac{\Lambda(dp)}p \to c \end{align}
as $r\to 0$.
\end{Prop}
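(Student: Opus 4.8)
Throughout write $g(r):=\int_{[0,r]}\Lambda(dp)/p$ and $\tilde g(w):=g(e^{-w})$; note that $g(1)<\infty$ (this is precisely the part of the hypothesis $\mu<\infty$ concerning $\Lambda$ near $0$) and that $\tilde g$ is nonnegative and nonincreasing. Substituting $r=e^{-w}$ turns $\log\frac1r$ into $w$, so condition \eqref{condi} is nothing but $\sqrt w\,\tilde g(w)\to c$ as $w\to\infty$. The plan is to isolate from $b_n$ a main term of order $\sqrt{\log n}$ built from $\int f$, to show that this integral equals $\int_\kappa^{\log n}\tilde g$ up to $O(1)$, and finally to translate the asymptotics of the latter into the pointwise behaviour of $\tilde g$ by a monotone-density argument.

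First I would apply the partial-fraction identity $\frac1{\mu-f}=\frac1\mu+\frac f{\mu^2}+\frac{f^2}{\mu^2(\mu-f)}$ to obtain
\[ b_n=\frac{\log n-\kappa}\mu+\frac1{\mu^2}P_n+\frac1{\mu^2}R_n,\qquad P_n:=\int_\kappa^{\log n}f,\quad R_n:=\int_\kappa^{\log n}\frac{f^2}{\mu-f}\ . \]
Since $0\le f\le\mu/2$ on $[\kappa,\infty)$ we have $0\le R_n\le\frac2\mu\int_\kappa^{\log n}f^2$, and as $f(y)\to0$ a routine $\eps$-splitting of this integral gives $R_n=o(\sqrt{\log n})$ whenever $P_n=O(\sqrt{\log n})$. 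This reduces the proposition to the equivalence \eqref{condi}$\Leftrightarrow P_n=2c\sqrt{\log n}+o(\sqrt{\log n})$, established in the next two paragraphs; each direction of that equivalence also certifies $P_n=O(\sqrt{\log n})$, so the reduction may be applied both ways. Indeed, if \eqref{condi} holds then $P_n=2c\sqrt{\log n}+o(\sqrt{\log n})=O(\sqrt{\log n})$, whence the displayed reduction yields \eqref{condi2}; conversely \eqref{condi2} forces $P_n+R_n=O(\sqrt{\log n})$, hence $P_n=O(\sqrt{\log n})$ because $R_n\ge0$, so $R_n=o(\sqrt{\log n})$ and $\frac1{\mu^2}P_n=\frac{2c}{\mu^2}\sqrt{\log n}+o(\sqrt{\log n})$.

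The heart of the matter is the identity $P_n=\int_\kappa^{\log n}\tilde g(w)\,dw+O(1)$ as $n\to\infty$, which I would prove by Fubini. With $s=e^y$, $\beta=\beta(p):=\log\frac1{1-p}$, $a:=e^\kappa$ and the substitution $t=\beta s$,
\[ P_n=\int_{(0,1]}\frac{\beta}{p^2}\,\Psi_\beta(p)\,\Lambda(dp),\qquad \Psi_\beta(p):=\int_{\beta a}^{\beta n}\frac{1-e^{-t}}{t^2}\,dt, \]
whereas Fubini applied to $\int_\kappa^{\log n}\tilde g$ gives $\int_{(0,1]}\frac1p\,L(p)\,\Lambda(dp)$ with $L(p)=\big(\min(\log\tfrac na,\log\tfrac1{pa})\big)^+$. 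I would compare the two integrands through
\[ \frac{\beta}{p^2}\Psi_\beta-\frac1p L=\frac{\beta-p}{p^2}\,\Psi_\beta+\frac1p\,(\Psi_\beta-L). \]
The part $p>\tfrac12$ contributes $O(1/n)$ and is negligible; on $p\le\tfrac12$ one has $\beta=p+O(p^2)$, so $\frac{\beta-p}{p^2}=O(1)$, and an elementary estimate gives $\Psi_\beta(p)=L(p)+O(1)$ uniformly there (the factor $\beta$ cancels inside the logarithm). The second summand is then $O(1/p)$ and integrates to $O(g(1))=O(1)$; for the first summand one uses $\Lambda([0,1/n])\le\tilde g(\log n)/n$ on $\{p\le1/n\}$ (where $\Psi_\beta$ is of order $\log n$), and $\log\frac1p\le1/p$ together with $\int_{(0,1]}\log\frac1p\,\Lambda(dp)<\infty$ on $\{1/n<p\le\frac12\}$. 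Keeping this first summand under control — the weight mismatch $\frac{\beta-p}{p^2}$ times the large factor $\Psi_\beta\sim\log n$ over the tiny mass near $0$ — is the main technical obstacle, and it is exactly here that the finiteness bundled in $\mu<\infty$ is spent.

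It remains to prove the monotone-density equivalence
\[ \int_\kappa^{Y}\tilde g(w)\,dw=2c\sqrt Y+o(\sqrt Y)\quad\Longleftrightarrow\quad\sqrt w\,\tilde g(w)\to c . \]
The implication $(\Leftarrow)$ is immediate from $\int_\kappa^Y w^{-1/2}\,dw=2\sqrt Y+O(1)$ and an $\eta$-splitting of the error $o(1)$ in $\sqrt w\,\tilde g(w)=c+o(1)$. For $(\Rightarrow)$ with $c>0$ I would compare, for small $\eps>0$, the increment $\int_Y^{(1\pm\eps)Y}\tilde g$ with $\tilde g(Y)\cdot\eps Y$ by monotonicity, insert the assumed asymptotics of the left side, and let $\eps\to0$ to force $\liminf$ and $\limsup$ of $\sqrt Y\,\tilde g(Y)$ both to equal $c$; the case $c=0$ is even more direct from $\tilde g(Y)\cdot\tfrac Y2\le\int_{Y/2}^Y\tilde g=o(\sqrt Y)$. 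With $Y=\log n$ this, combined with the identity of the previous paragraph, gives \eqref{condi}$\Leftrightarrow P_n=2c\sqrt{\log n}+o(\sqrt{\log n})$, and the reduction of the second paragraph then closes the proof.
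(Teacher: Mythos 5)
Your proof is correct, and in its central step it takes a genuinely different route from the paper's. Both arguments share the same skeleton: expand $\frac{1}{\mu-f}$ to reduce \eqref{condi2} to the statement $\int_\kappa^{\log n}f(y)\,dy=2c\sqrt{\log n}+o(\sqrt{\log n})$ (your $P_n$/$R_n$ splitting is the paper's \eqref{formula}, with the same two-way bookkeeping needed to run the reduction in both directions), and then convert between integral asymptotics and pointwise behaviour of a monotone function by the same $\eta$-increment trick. Where you diverge is in how $f$ is tied to $g(r)=\int_{[0,r]}\Lambda(dp)/p$. The paper sandwiches $f(\log\tfrac1r)$ pointwise between $e^{-r^{\beta-1}}g(r^\beta)$ and $g(r^\alpha)+r^{1-\alpha}g(1)$ (estimates \eqref{estimate1}--\eqref{estimate2}), lets $\alpha,\beta\to1$ to get $f(y)=(c+o(1))/\sqrt y$, and applies the monotone-density step to $f$ itself; you instead prove the exact identity $\int_\kappa^{\log n}f=\int_\kappa^{\log n}\tilde g+O(1)$ by Fubini and apply the monotone-density step to $\tilde g$. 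Your identity is hypothesis-free (it needs only the dust condition, via $\log\tfrac1p\le\tfrac1p$) and strictly more informative: it would deliver further terms in the expansion of $b_n$, or normalizations other than $\sqrt{\log n}$, without redoing the sandwich, whereas the paper's $\alpha,\beta\to1$ device is shorter but tailored to the $1/\sqrt{\cdot\,}$ regime. The price is the more technical integrand comparison; your estimates there are sound (the first summand is controlled by $\Psi_\beta\le\log^+\tfrac1p+O(1)$ together with $\int\log\tfrac1p\,\Lambda(dp)\le\int p^{-1}\Lambda(dp)<\infty$), up to one harmless slip: the region $p>\tfrac12$ contributes $O(1)$ rather than $O(1/n)$ to each of the two integrals (for fixed $p$ the quantity $\Psi_\beta(p)$ tends to a positive constant as $n\to\infty$), but $O(1)$ is all the identity requires.
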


\paragraph{Example.} We consider for $\gamma \in \mathbb R$ the finite measures
\[ \Lambda(dp) = \big(1+\log \tfrac 1p\big)^{-\gamma} \, dp\ , \ 0\le p \le 1 \ . \]
For $\gamma=0$ this gives the Bolthausen-Sznitman coalescent. For $\gamma >1$ it leads to  coalescents with $\mu, \sigma^2<\infty$. Note that \eqref{integral} is satisfied iff $\gamma >2$, and \eqref{condi} is fulfilled iff $\gamma >3/2$. Thus within the range $1<\gamma \le 3/2$ one has to come back to the constants   $b_n$  in the central limit theorem.

The law of large numbers from Theorem 1 holds for all $\gamma >1$. For the  regime $\gamma \le 1$, Theorem~\ref{LLN} just tells us that  $\tau_n= o_P(\log n)$. For $\gamma=0$, the Bolthausen-Sznitman coalescent, it is known that $\tau_n$ is already down to the order $\log\log n$ \cite{Go}.  For $\gamma <0$, applying Schweinsberg's criterion \cite{Schw},  it can be shown that the coalescents come down from infinity. There remains the gap  $0< \gamma \le 1$. It is tempting to conjecture that $\tau_n$ is of order $(\log n)^\gamma$ for $0<\gamma < 1$.
\qed

\mbox{}\\
If equation \eqref{condi} is violated then the subsequent approximation to $b_n$ may be practical.
Starting from the  identity  
\[ \frac{1}{\mu-f(y)} = \frac 1\mu +\frac {f(y)}{\mu^2} + \frac{f^2(y)}{\mu^3} +\cdots + \frac{f^k(y)}{\mu^{k+1}}+ \frac {f^{k+1}(y)}{\mu^{k+1}(\mu-f(y))}\]
we obtain the expansion
\begin{align*} b_n = \frac {\log n}\mu + \frac {1}{\mu^2} \int_0^{\log n} f(y)\, dy + \cdots + \frac 1{\mu^{k+1}}\int^{\log n}_0 f^k(y)\, dy + O\Big( \int_0^{\log n} f^{k+1}(y)\, dy\Big) \ .
\end{align*}

\newpage

Let us now explain the method of proving Theorems \ref{LLN} and \ref{CLT}. We are mainly dealing with $\Lambda$-coalescents having a {\em dust component}. Shortly speaking these are the coalescents for which the rate, at which a single lineage merges with some others  from the sample, stays bounded as the sample size tends to infinity. As is well-known this property is characterized by the condition
\begin{align}
\label{dustcond} \int_{[0,1]} \frac{\Lambda(dp)}{p} < \infty \ . 
\end{align}
An established tool for the analysis of a $\Lambda$-coalescent with dust is the subordinator $S=(S_t)_{t \ge 0}$, which is used to approximate the logarithm of its block-counting process $N_n=(N_n(t))_{t \ge 0}$ (see e.g. Pitman \cite{Pi}, M\"ohle \cite{Moe}, and the above mentioned paper by Gnedin et al \cite{Gne}). We will recall this subordinator in Sec.~3. Indeed,  analogues of Theorems 1 and 2 are well-known for first-passage times of subordinators with finite first resp. second moment. However, this approximation neglects the subtlety that a coalescent of $b$ lineages results in a downward jump of size $b-1$ (and not $b$) for the process $N_n$. This effect becomes significant when many small jumps accumulate over time, as it happens close to the dustless case (and as it becomes visible in Proposition \ref{Prop2} and in the above example). Then the appropriate approximation is provided by a {\em drifted} subordinator $Y_n=(Y_n(t))_{t \ge0}$,   given by the SDE
\[ Y_n(t) = \log n -S_t + \int_0^t f(Y_n(s))\, ds \ , \ t \ge 0\ ,\]
with initial value $Y_n(0)=\log n$. The drift compensates the just mentioned difference between $b$ and $b-1$. In Kersting et al \cite{Ke} it is shown that  
\[  \sup_{t<\tau_n}\big|Y_n(t)- \log N_n(t)\big|=O_P(1)\]
as $n \to \infty$, that is, these random variables are bounded in probability. In Sec.~3 we suitably strengthen this result. In Sec.~2 we provide the required limit theorems for passage times for a  more general class of drifted subordinators. The above results are then proved in Sec.~4.

It turns out that the regime considered by Gnedin et al \cite{Gne} is the one in which the random variables $\int_0^{\tau_n} f(Y_n(s))\, ds$ are bounded in probability uniformly in $n$. This can be seen to be equivalent to  the requirement $\int_0^\infty f(y)\, dy<\infty$, which likewise is equivalent to \eqref{integral} (see the proof of Corollary 12 in \cite{Ke}). Under this assumption Gnedin et al \cite{Gne} proved their central limit theorem  also with non-normal   (stable or  Mittag-Leffler) limiting distributions of $\tau_n$. A similar generalization of Theorem \ref{CLT} is feasible in the general dust case, without the requirement \eqref{integral}.

\section{Limit theorems for a drifted subordinator}

Let $S=(S_t)_{t \ge 0}$ be a pure jump subordinator with L\'evy measure $\lambda$ on $(0,\infty)$. Recall that this requires
\[ \int_0^\infty (y\wedge 1) \, \lambda(dy) < \infty \ .\]
With regard to the mentioned properties of the function in \eqref{ourf}, let $f:\mathbb R \to \mathbb R$ be an arbitrary positive, non-increasing, continuous function with  
\[ \lim_{y\to \infty} f(y)=0\ . \]
Let the process $Y^z=(Y^z_t)_{t \ge 0}$ denote the unique solution of the SDE
\begin{align} \label{SDE} Y^z_t = z-S_t + \int_0^t f(Y^z_s)\, ds \end{align}
with initial value $z> 0$. 
We will investigate the asymptotic behaviour of its passage  times across $x \in \mathbb R$,
\[ T^z_x := \inf\{ t \ge 0: Y^z_t < x \}\ ,\]
 in the limit $z \to \infty$. 

The first result provides a law of large numbers. Denote
\begin{align} \mu := \int_{(0,\infty)} y\, \lambda(dy) \ . 
\label{defmu}
\end{align}
\begin{Prop} \label{Prop3} Assume that $\mu<\infty$. Then for any $x \in \mathbb R$
\[ \frac{1}z \, T^z_x \to \frac 1\mu \]
in probability as $z\to \infty$.
\end{Prop}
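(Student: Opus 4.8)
The plan is to sandwich the passage time $T^z_x$ between two first-passage times of the subordinator $S$ over a level of height $\approx z$, both of which grow like $z/\mu$ by the strong law of large numbers $S_t/t \to \mu$ (valid since $\mu<\infty$); the whole point will be that the drift term in \eqref{SDE} is negligible on the scale $z$.

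For the lower bound, since $f\ge 0$ the drift in \eqref{SDE} is nondecreasing in $t$, whence $Y^z_t \ge z - S_t$ for all $t$. Thus $Y^z_t\ge x$ as long as $S_t\le z-x$, and therefore $T^z_x \ge \inf\{t: S_t > z-x\}$. The right-hand side is the first-passage time of $S$ over the level $z-x$; since $S_t/t\to \mu$ almost surely it is asymptotic to $(z-x)/\mu$, giving $\liminf_z T^z_x/z \ge 1/\mu$.

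For the upper bound I would fix a level $M$ large enough that $f(M)<\mu$ (possible because $f(y)\to 0$) and split $T^z_x = \sigma_M + \rho$, where $\sigma_M:=\inf\{t: Y^z_t<M\}$. As long as $Y^z$ stays above $M$ one has $f(Y^z_s)\le f(M)$, so $Y^z_t\le z-S_t+f(M)t$ up to time $\sigma_M$, from which $\sigma_M \le \inf\{t: S_t-f(M)t > z-M\}$; since $(S_t-f(M)t)/t\to \mu-f(M)>0$ this yields $\limsup_z \sigma_M/z \le 1/(\mu-f(M))$. For the remaining time $\rho$, the strong Markov property at $\sigma_M$ identifies $\rho$ with the passage time to below $x$ of a copy of the process started from $Y^z_{\sigma_M}<M$; by the comparison principle for \eqref{SDE} (monotonicity of the solution in its initial value) this is stochastically dominated by the passage time from the fixed level $M$, whose law does not depend on $z$. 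Hence $\rho=O_P(1)=o_P(z)$ and $\limsup_z T^z_x/z \le 1/(\mu-f(M))$. Letting $M\to\infty$ gives the matching bound $1/\mu$, and together with the lower bound this proves $T^z_x/z\to 1/\mu$ in probability.

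The main obstacle is the $O_P(1)$ control of $\rho$, i.e. that the passage time from a fixed level to below $x$ is almost surely finite. This is delicate precisely when $f(x)>\mu$: then in the band between $x$ and the equilibrium level $y^*$ determined by $f(y^*)=\mu$ the upward drift exceeds the mean descent $\mu$ of $S$, so the naive comparison $Y^z_t\le z-S_t+f(x)t$ drifts upward and never reaches $x$. What rescues the argument is that the net drift $f-\mu$ is negative above $y^*$ and positive below it, so $Y^z$ is recurrent around $y^*$ and returns to any neighbourhood of $y^*$ infinitely often; each return offers a fresh chance that a downward jump of $S$ carries the path below $x$, and a conditional Borel--Cantelli argument forces $\rho<\infty$ almost surely. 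An alternative route to the drift estimate, which sidesteps the choice of the band above $M$, is to use the monotonicity of $f$ together with $Y^z_s\ge z-S_s$ to dominate $f(Y^z_s)\le f(z-S_s)$; a renewal (potential-measure) computation then gives $\E\int_0^{\inf\{s:S_s>z-x\}} f(z-S_s)\,ds \approx \mu^{-1}\int_x^z f(u)\,du = o(z)$, exhibiting directly that the drift accumulated up to the subordinator's passage time is of smaller order than $z$.
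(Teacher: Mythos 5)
Your argument follows essentially the same route as the paper's proof: the lower bound by discarding the nonnegative drift (so $Y^z_t\ge z-S_t$ and $T^z_x$ dominates the first-passage time of $S$ over $z-x$), and the upper bound by bounding the drift by $f(M)$ for a high level $M$ and showing that the residual passage time from $M$ down to $x$ is $O_P(1)$ uniformly in $z$ --- which is precisely the paper's device of first proving the bound for $x$ large and then invoking that $T^z_{x_1}-T^z_{x_2}$ is bounded in probability. If anything, you are more explicit than the paper about the one delicate point (the a.s.\ finiteness of the passage time from a fixed level when $f>\mu$ below it), which the paper asserts without proof; your recurrence-plus-Borel--Cantelli sketch is the right way to justify it.
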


\begin{proof} 
Let $z >x$. Then
\[ \{T^z_x \ge t\} = \{ Y^z_s \ge x \text{ for all }s \le t \} = \Big\{S_s \le z-x + \int_0^sf(Y^z_u)\, du \text{ for all } s \le t\Big\} \ . \]
By positivity of the function $f$ it follows
$ \mathbf P( T^z_x \ge t) \ge \mathbf P( S_t \le  z-x) $, thus for any $\varepsilon >0$ 
\begin{equation}\label{oneplusepsilon}
\mathbf P\Big(T_x^{  z}\ge  (1-\varepsilon)\frac z{\mu} \Big) \ge \mathbf P\big( S_{(1-\varepsilon)z/\mu} \le z -x\big) \ .
\end{equation}
Now $\mu=\mathbf E[S_1]$, thus by the law of large numbers
\[\frac{S_t}t \to \mu \]
a.s., hence the right-hand term in \eqref{oneplusepsilon} converges to 1 for $z \to \infty$ and also 
\[\mathbf P\Big(T_x^{  z}\ge  (1-\varepsilon)\frac z{\mu} \Big) \to 1 \ . \]

On the other hand,
\begin{align*}\{ T^z_x \ge t\} = \{ Y^z_s \ge x \text{ for all } s \le t\}=\Big\{ Y^z_s \ge x \text{ for all }s \le t\, ,\,  S_t \le z -x + \int_0^t f(Y_s^z)\,ds \Big\} \ .
\end{align*}
Monotonicity of $f$ implies
$ \mathbf P(  T^z_x \ge t ) \le \mathbf P \big(S_t \le z -x + t f(x) \} \big) $.
Therefore, since $f(x)\to 0$ as $x \to \infty$,
\begin{align*} \mathbf P\Big(T_x^{ z}\ge (1+\varepsilon)\frac z{\mu} \Big) &\le \mathbf P\Big( S_{(1+\varepsilon)z/\mu } \le z  -x + (1+\varepsilon)\frac z{\mu}f(x) \Big)\\&\le  \mathbf P\big( S_{(1+\varepsilon)z/\mu } \le z(1+\varepsilon/2) -x\big) \end{align*}
if only $x$ is sufficiently large. Now the right-hand term converges to 0, thus it follows that
\[ \mathbf P\Big(T_x^{ z}\ge (1+\varepsilon)\frac z{\mu} \Big) \to 0\ . \]
Note that we proved this result only for $x$ sufficiently large, depending on $\varepsilon$. However, this restriction may be skipped, since for fixed $x_1<x_2$ the random variables $T_{x_1}^z-T_{x_2}^z$ are  bounded in probability uniformly in $z$. Thus altogether we have for any $x$
\[ \mathbf P\Big( (1-\varepsilon) \frac z{\mu} \le T^z_x < (1+\varepsilon)  \frac z{\mu} \Big)\to 1\]
as $z \to \infty$, which (since $\varepsilon > 0$ was arbitrary) is our assertion.
\end{proof}

\mbox{}\\
Now we turn to a central limit theorem for  passage times of the processes $Y^z$.  
Let the function $\beta_z$, $z\ge \kappa$, be given by
\begin{equation}\label{defbeta}
 \beta_z:= \int_\kappa^z \frac{dy}{\mu-f(y)} \ , 
 \end{equation}
where we choose $\kappa\ge 0$ so large that
\[ \sup_{y \ge \kappa} f(y) \le \frac \mu 2 \ . \]

\begin{Prop}\label{Prop4} Suppose that
\begin{align}\label{sigma} \sigma^2 := \int_{(0,\infty)} y^2 \, \lambda (dy) < \infty\ . 
\end{align}
 Then
\[ \frac{ T_x^z- \beta_z}{\sqrt z}\ \stackrel d \to\ N(0, \sigma^2/\mu^3) \]
as $z \to \infty$.
\end{Prop}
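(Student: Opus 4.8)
The plan is to extract $T^z_x$ from the single identity produced by the crossing. At $t=T^z_x$ the process $Y^z$ drops below $x$, and because the overshoot is dominated by a generic jump of $S$ --- a random variable whose law does not depend on $z$ --- we have $Y^z_{T^z_x}=x+O_P(1)$. Feeding this into \eqref{SDE} gives
\[ S_{T^z_x}=z-x+\int_0^{T^z_x}f(Y^z_s)\,ds+O_P(1)\qquad(z\to\infty)\ . \]
Thus the whole problem reduces to pinning down the drift functional $A_z:=\int_0^{T^z_x}f(Y^z_s)\,ds$ to additive precision $o_P(\sqrt z)$, after which the passage time is read off from the fluctuations of $S$.

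First I would identify the deterministic target. Let $\phi$ solve $\phi'=-(\mu-f(\phi))$, $\phi(0)=z$, and let $t_z$ be its passage time, $\phi(t_z)=x$. The substitution $y=\phi(s)$ gives $\int_0^{t_z}f(\phi_s)\,ds=\int_x^z \frac{f(y)}{\mu-f(y)}\,dy$, and since $\frac1{\mu-f}=\frac1\mu+\frac{f}{\mu(\mu-f)}$ one obtains
\[ z-x+\int_x^z\frac{f(y)}{\mu-f(y)}\,dy=\mu\int_x^z\frac{dy}{\mu-f(y)}=\mu\beta_z+O(1)\ , \]
the $O(1)$ accounting for the change of lower limit from $x$ to $\kappa$. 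Hence it suffices to prove the concentration statement $A_z=\int_x^z\frac{f(y)}{\mu-f(y)}\,dy+o_P(\sqrt z)$.

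The heart of the matter --- and the step I expect to be the main obstacle --- is this concentration. Naively comparing $Y^z$ to $\phi$ at equal times is useless, since the timing fluctuation $T^z_x-t_z$ is itself of order $\sqrt z$ and produces a vertical discrepancy of the same order near the end of the descent; the pathwise $O_P(1)$ bound of \cite{Ke} is a comparison to the true block-counting process, not to a fixed deterministic curve, and is in any case too weak here. Instead I would pass to the occupation measure across levels: writing $\ell_z(y)$ for the time density that $Y^z$ spends near level $y$, one has $T^z_x=\int_x^z\ell_z(y)\,dy$ and $A_z=\int_x^z f(y)\,\ell_z(y)\,dy$, with mean density $(\mu-f(y))^{-1}$. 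The centred field $\ell_z(y)-(\mu-f(y))^{-1}$ integrates in $y$ to the order-$\sqrt z$ timing fluctuation, so it carries long-range, random-walk-type correlations. Integrating by parts in $y$ against the non-increasing weight $f$, and using $f(y)\to0$, transfers these correlations onto $\int (\text{partial sums})\,df(y)$, where the smallness of $f$ at large levels suppresses exactly the part of the descent over which the timing fluctuation builds up; a second-moment estimate then yields variance $o(z)$, i.e.\ the desired $o_P(\sqrt z)$. It is precisely the decay $f(y)\to0$ (absent any integrability such as \eqref{integral}) that makes the weighted functional $A_z$ far more stable than the unweighted passage time.

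Granting the concentration, the assembly is routine. We get $S_{T^z_x}=\mu\beta_z+o_P(\sqrt z)$. Decomposing $S_t=\mu t+M_t$ with $M$ a centred L\'evy process of variance $\sigma^2 t$ turns this into $\mu(T^z_x-\beta_z)=-M_{T^z_x}+o_P(\sqrt z)$, so that
\[ \frac{T^z_x-\beta_z}{\sqrt z}=-\frac{M_{T^z_x}}{\mu\sqrt z}+o_P(1)\ . \]
By Proposition \ref{Prop3}, $T^z_x/z\to1/\mu$ in probability; combining the functional central limit theorem for $M$ (available since $\sigma^2<\infty$) with this random time via the continuous-mapping theorem gives $M_{T^z_x}/\sqrt z\stackrel d\to N(0,\sigma^2/\mu)$, whence the claimed $N(0,\sigma^2/\mu^3)$. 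Finally, the concentration step is first carried out for $x$ large, and this restriction is removed exactly as in the proof of Proposition \ref{Prop3}, using that $T^z_{x_1}-T^z_{x_2}$ is tight uniformly in $z$.
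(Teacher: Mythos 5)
Your overall architecture is sound, and your endgame is in fact cleaner than the paper's: evaluating \eqref{SDE} at $t=T^z_x$ gives the exact identity $S_{T^z_x}=z-x+(\text{overshoot})+\int_0^{T^z_x}f(Y^z_s)\,ds$, the algebra $z-x+\int_x^z\frac{f(y)}{\mu-f(y)}\,dy=\mu(\beta_z-\beta_x)$ is correct, the removal of the large-$x$ restriction via tightness of $T^z_{x_1}-T^z_{x_2}$ is exactly the paper's device, and the Anscombe-type conclusion using Proposition \ref{Prop3} delivers the right variance $\sigma^2/\mu^3$. (One imprecision: the overshoot is \emph{not} dominated by ``a generic jump whose law does not depend on $z$'' --- the straddling jump is size-biased --- but under \eqref{sigma} the largest jump of $S$ on $[0,Cz]$ is $o_P(\sqrt z)$, which is all you need.) Your guiding idea for the concentration of $A_z$ --- that the weight $f$, tending to $0$, suppresses the order-$\sqrt z$ fluctuations accumulated over the descent --- is precisely the paper's key mechanism.

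The gap is that this concentration step, which you yourself identify as the heart of the matter, is asserted rather than proved. The occupation-density formulation with ``mean density $(\mu-f(y))^{-1}$'' and a ``second-moment estimate yielding variance $o(z)$'' is a heuristic: $Y^z$ is non-monotone, $\ell_z$ carries the long-range correlations you mention, and no covariance computation is indicated. If one integrates by parts pathwise, the mean-density issue drops out and what is actually required is the uniform bound $\sup_{x\le y\le z}\big|\int_y^z\ell_z(u)\,du-(\beta_z-\beta_y)\big|=O_P(\sqrt z)$, together with control of the time spent above level $y$ after the first down-crossing of $y$. The paper obtains exactly this missing ingredient by the equal-time comparison you dismiss as useless: exploiting the monotonicity of $f$ and the last sign-change time $u_t$, it shows $|Y^z_t-\rho^z_t|\le 2M_t$ with $M_t=\sup_{u\le t}|S_u-\mu u|=O_P(\sqrt t)$ (no Gronwall blow-up), whence the estimate \eqref{Mrho} bounds the drift discrepancy by $2M_t\,f(\rho^z_t-2M_t)$; cutting the time integral at $\beta_z-c\sqrt z$, where $\rho^z\ge c\sqrt z/(2\mu)$, makes the $f$-factor small. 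Your objection to the equal-time comparison (a vertical discrepancy of order $\sqrt z$ near the end of the descent) is true but harmless, because the discrepancy only ever enters multiplied by $f$ evaluated at a still-high level --- which is your own insight. So to complete your argument you must either carry out the occupation-measure second-moment analysis in detail, or import something equivalent to the comparison with $\rho^z$, at which point the two proofs essentially merge. (One genuine simplification on your side: the paper's lower bound needs the extra tightness of the recrossing time $R_{z,x}$, which your first-passage identity sidesteps.)
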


\begin{proof} (i) Note again that for $x_1 < x_2$ the random variables $T^z_{x_1}-T^z_{x_2}$ are bounded in probability uniformly in $z$. Thus it suffices to prove our theorem for all $x \ge x_0$ for some $x_0\in \mathbb R$. Therefore we may    change $f(x)$ for all $x<x_0$. We do it in such a way that $f(x)\le \mu/2$ for all $x\in \mathbb R$, without touching the other properties of $f$. Thus we assume from now that
\begin{align} \label{fmu} f(y) \le \frac \mu 2 \quad \text{ for all } y \in \mathbb R\ 
\end{align}
and set $\kappa=0\ $
in \eqref{defbeta}. Consequently, 
\begin{equation}\label{betabounds}
\frac z\mu \le \beta_z\le \frac{2z}\mu\ , \quad z >0\ .
\end{equation}

For any $z >0$ we define the function $\rho^z(t)=\rho^z_t$, $0 \le t \le \beta_z$, such that
\[ \beta_{\rho^z(t)}=\beta_z-t\ , 0\le t \le \beta_z\ , \]
in particular $\rho^z(0)=z$ and $\rho^z(\beta_z)=0$. This means that $\rho^z$ arises by first inverting the function $\beta$ (restricted to the interval $[0, z]$), and then reversing the time parameter on its domain $[0, \beta_z]$. By differentiation we obtain
\[ \dot\rho^z_t= f(\rho^z_t)-\mu\ , \]
  consequently $\dot \rho_t \le - \mu/2$ and
\[ \rho^z_t = z - \mu t + \int_0^t f(\rho^z_s)\, ds\ . \]

(ii) A glimpse on \eqref{SDE} suggests that $\rho^z$ will make a good approximation for the process $Y^z$.
In order to estimate their difference observe that
\[ Y_t^z- \rho^z_t = -(S_t-\mu t) + \int_0^t (f(Y^z_s)-f(\rho^z_s))\, ds \ .\] 
For given $t>0$ define
\begin{align*}
u_t =\begin{cases} \sup \{ s<t: Y^z_s \le \rho^z_s\} \mbox{ on the event } Y_t^z > \rho^z_t 
\\
\sup \{ s<t: Y^z_s \ge \rho^z_s\} \mbox{ on the event } Y_t^z < \rho^z_t
\end{cases}
\end{align*}
and $u_t:=t$ on the event $Y_t^z = \rho^z_t$.
We have $0 \le u_t \le t$, since $Y^z_0=z= \rho_0^z$. Because $f$ is a decreasing function, the event $Y_t^z > \rho^z_t$ implies that
\begin{align*}Y_t^z- \rho^z_t &\le Y_t^z- \rho^z_t - \int_{u_t}^t (f(Y^z_s)-f(\rho^z_s))\, ds - (Y^z_{u_t-}- \rho^z_{u_t-})\\ 
&= -(S_t- \mu t) + (S_{u_t-}-\mu u_t) \ .
\end{align*}
On the event $Y_t^z < \rho^z_t$ there is an analogous estimate from below, altogether
\begin{align*}
 |Y_t^z- \rho^z_t| \le 2 M_t \quad \text{ with } M_t:=  \sup_{u \le t} |S_u-\mu u| \ . 
 \end{align*}
Consequently, $Y^z_s \ge \rho^z_s-2M_s \ge \rho^z_s-2M_t$ for $s \le t$ and by means of the monotonicity of $f$
\begin{align*}
\int_0^t f(Y^z_s)\, ds -\int_0^tf(\rho^z_s)\, ds \le \int_0^t f(\rho^z_s-2M_t)\, ds -\int_0^tf(\rho^z_s)\, ds \le2M_t f(\rho^z_t-2M_t)\ .
\end{align*}
An analoguous estimate is valid from below and we obtain
\begin{align}
\Big|\int_0^t f(Y^z_s)\, ds -\int_0^tf(\rho^z_s)\, ds \Big| \le 2M_t f(\rho^z_t-2M_t)\ .
\label{Mrho}
\end{align}

At this point we recall that under the above assumptions on the subordinator $S$ by Donsker's invariance principle we have
\[ M_t =  O_P( \sqrt t) \]
as $t \to \infty$.

(iii) Now we derive some upper estimates of probabilities. Given $a,x\in \mathbb R$,  we have for any $c>0$
\begin{align*} \mathbf P&( T^z_x \ge \beta_z + a \sqrt z) = \mathbf P(Y^z_t \ge x \text{ for all } t \le \beta_z+a\sqrt z)\\ 
&= \mathbf P\Big(S_{\beta_z + a\sqrt z} \le z-x + \int_0^{\beta_z + a\sqrt z} f(Y^z_s) \, ds\, , \,Y^z_t \ge x \text{ for all } t \le \beta_z+a\sqrt z\Big)\\
&\le \mathbf P\Big(S_{\beta_z + a\sqrt z} \le z-x + f(x)(c+|a|)\sqrt z+ \int_0^{\beta_z -c\sqrt z} f(Y^z_s) \, ds\Big)
\end{align*}
We now bring \eqref{Mrho} into play. From the definition of $\rho^z$ we have, writing $\beta(y) = \beta_y$, that
\[\beta(\rho^z(\beta_z-c\sqrt z))=  c\sqrt z,\]
thus because of \eqref{betabounds}
\[\rho^z(\beta_z-c\sqrt z) \ge \frac{c\sqrt z}{2\mu}.\] 
Then  
on the event $M_{\beta_z} \le  {c\sqrt z} /{(8\mu)}$  we have
\[\rho^z(\beta_z-\sqrt z) - 2M_{\beta_z-c\sqrt z}\ge \frac{c\sqrt z}{2\mu}- \frac{c\sqrt z}{4\mu} = \frac{c\sqrt z}{4\mu}. \] 
Consequently, by means of \eqref{Mrho} and since $\beta_z \le 2z/\mu$
\begin{align}
\mathbf P( T^z_x &\ge \beta_z + a \sqrt z) \le \mathbf P\Big(M_{2z/\mu} >  \frac{c\sqrt z}{8\mu}\Big) \notag\\
&  \mbox{}+
\mathbf P\Big(S_{\beta_z + a\sqrt z} \le z-x + f(x)(c+|a|)\sqrt z+ \int_0^{\beta_z} f(\rho^z_s) \, ds+ \frac{c\sqrt z}{4\mu} f\Big(\frac{c\sqrt z}{4\mu}\Big) \Big)  \ . \label{Ungl1}
\end{align}
Moreover, by definition of $\rho^z$,
\[ z+\int_0^{\beta_z} f(\rho^z_s) \, ds =\rho^z(\beta_z)+\mu \beta_z =  \mu \beta_z . \]
Therefore, if we fix $\varepsilon>0$, let $c$ be so large that the first right-hand probability in \eqref{Ungl1} is smaller than $\varepsilon$, then choose $z$ so large that $(c/4\mu) f(\frac{c\sqrt z}{ 4\mu}) \le \varepsilon $, and also choose $x >0$ and  so large that $cf(x)(c+|a|) \le \varepsilon $, then we end up with
\[ \mathbf P( T^z_x \ge \beta_z + a \sqrt z) \le  \varepsilon  + \mathbf P\Big(S_{\beta_z + a\sqrt z} \le  \mu\beta_z  +2 \varepsilon\sqrt z   \Big)\ . \]
Also by the law of large numbers 
\[ S_{\beta_z + a\sqrt z}- S_{\beta_z} \sim \mu a\sqrt z \]
in probability. Therefore 
\[\mathbf P( T^z_x \ge \beta_z + a \sqrt z) \le  2\varepsilon+ \mathbf P\big(S_{\beta_z} \le  \mu\beta_z  + (-\mu a+3\varepsilon)\sqrt z   \big)\ .\]
Moreover $\mu \beta_z \sim z $, hence  
\[\mathbf P( T^z_x \ge \beta_z + a \sqrt z) \le 2 \varepsilon+ \mathbf P\big(S_{\beta_z} \le  \mu\beta_z  + (-\mu a+4\varepsilon)\mu^{1/2} \sqrt{\beta_z}  \big)\]
for large $z$. Now from assumption \eqref{sigma} and the central limit theorem there follows 
\[ \frac{S_t-\mu t} {\sqrt {\sigma^2  t}} \ \stackrel d\to\ L \ , \]
where   $L$ denotes a standard normal random variable. Thus 
\[ \limsup_{z \to \infty} \mathbf P( T^z_x \ge \beta_z + a \sqrt z)\le 2\varepsilon + \mathbf P (L \le (-\mu a+4\varepsilon)\mu^{1/2} \sigma^{-1})\ . \]
Note that the choice of $x$ depends on $\varepsilon$ in our proof. However, since again the differences $T^z_{x_1}-T^z_{x_2}$ are  bounded in probability uniformly  in $z$, this estimate generalizes to all $x$. Now letting $\varepsilon \to 0$ we obtain
\[\limsup_{z \to \infty} \mathbf P \Big( \frac{T^z_x- \beta_z}{\sqrt z} \ge a\Big) \le \mathbf P(L \le -\mu^{3/2} \sigma^{-1}a) \ . \]
This is the first part of our claim.

(iv) For the lower estimates we first  introduce the random variable
\[ R_{z,x} := \sup\{ t\ge 0: Y^z_t \ge x\} - \inf \{ t \ge 0: Y^z_t < x\} \]
which is the length of the time interval where $Y^z_t-x$ is changing from positive sign to ultimately negative sign (note that the paths of $Y^z$ are {\em not} monotone). We claim that these random variables are bounded in probability, uniformly in $z$ and $x$. Indeed, with 
\[ \eta_{z,x}:= \inf \{t \ge 0: Y^z_t < x\} \]
we have for $t>\eta=\eta_{z,x} $ because of $Y^z_\eta \le x$ and \eqref{fmu}
\begin{align*}Y_t^z &= Y_\eta^z -(S_t-S_\eta) +\int_\eta^tf(Y^z_s)\, ds\le x -(S_t-S_\eta) + \frac \mu 2 (t-\eta) \ .
\end{align*}
Thus $R_{z,x}$ is bounded from above by
\[ R_{z,x}' := \sup \{u \ge 0: (S_{\eta_{z,x}+u}-S_{\eta_{z,x}}) -  \mu u/2\le 0 \} \]
These random variables are a.s. finite. Moreover, they are identically distributed, since $\eta_{z,x}$ are stopping times. This proves that the $R_{z,x}$ are uniformly bounded in probability.

Now for the lower bounds we have for $a,b \in \mathbb R$
\begin{align*}
\mathbf P(T^z_x \ge \beta_z + a \sqrt z) &\ge \mathbf P( Y^z_t \ge x \text{ for all } t \le \beta_z+a\sqrt z\, ,\, R_{z,x} \le b)\\
&= \mathbf P( Y^z_t \ge x \text{ for all } \beta_z +a\sqrt z- b\le  t \le \beta_z+a\sqrt z\, ,\, R_{z,x} \le b)\ .
\end{align*}
For these $t$ we have 
\[Y_t^z = z- S_t+ \int_0^t f(Y^z_s)\, ds \ge z-S_{\beta_z+a\sqrt z}+ \int_0^{\beta_z +a\sqrt z- b} f(Y^z_s)\, ds \ , \]
therefore
\begin{align*}
\mathbf P(T^z_x \ge \beta_z + a \sqrt z) &\ge \mathbf P\Big( S_{\beta_z+a\sqrt z} \le z- x+ \int_0^{\beta_z+a\sqrt z- b}f(Y^z_s)\, ds \, , \, R_{z,x} \le b\Big)\\
&\ge \mathbf P\Big( S_{\beta_z+a\sqrt z} \le z- x+ \int_0^{\beta_z-c\sqrt z}f(Y^z_s)\, ds\Big) - \mathbf P(R_{z,x}>b)
\end{align*}
for $c$ sufficiently large.

We now bring, as in part (iii), \eqref{Mrho} into play. Proceeding analogously we obtain instead of \eqref{Ungl1} the estimate
\begin{align*}
\mathbf P( T^z_x \ge \beta_z + a \sqrt z) \ge &- \mathbf P(R_{z,x}>b)-\mathbf P\Big(M_{2z/\mu} >  \frac{c\sqrt z}{8\mu}\Big) \\
&  \mbox{}+
\mathbf P\Big(S_{\beta_z + a\sqrt z} \le z-x +  \int_0^{\beta_z-c\sqrt z} f(\rho^z_s) \, ds- \frac{c\sqrt z}{4\mu} f\Big(\frac{c\sqrt z}{4\mu}\Big) \Big)  \ . 
\end{align*}
Also, since $\rho^z_{\beta_z}=0$ and $\dot \rho^z_t \le -\mu/2$, 
\[ \int_{\beta_z-c\sqrt z}^{\beta_z} f(\rho^z_s) \, ds \le \int_0^{c\sqrt z} f(\mu s/2) \, ds  = o(\sqrt z) \ .\]
Hence, for given $\varepsilon>0$ and $z$ sufficiently large
\begin{align*}
\mathbf P( T^z_x \ge \beta_z + a \sqrt z) \ge &- \mathbf P(R_{z,x}>b)-\mathbf P\Big(M_{2z/\mu} >  \frac{c\sqrt z}{8\mu}\Big) \\
&  \mbox{}+
\mathbf P\Big(S_{\beta_z + a\sqrt z} \le z-\varepsilon \sqrt z +  \int_0^{\beta_z} f(\rho^z_s) \, ds- \frac{c\sqrt z}{4\mu} f\Big(\frac{c\sqrt z}{4\mu}\Big) \Big)  \ . 
\end{align*}
Returning to the  arguments of part (iii) we choose $b$, $c$ and then $z$ so large that we arrive at 
\[ \mathbf P( T^z_x \ge \beta_z + a \sqrt z)  \ge - 2\varepsilon + \mathbf P\Big(S_{\beta_z + a\sqrt z} \le \mu \beta_z- 2\varepsilon \sqrt z \Big)
\]
and further at
\[ \liminf_{z \to \infty} \mathbf P( T^z_x \ge \beta_z + a \sqrt z)\ge -3\varepsilon + \mathbf P (L \le (-\mu a-3\varepsilon)\mu^{1/2}\sigma^{-1} )\ . \]
The limit $\varepsilon \to 0$ leads to the desired lower estimate.
\end{proof}

\section{Approximating the block counting process}
In this section we derive a strengthening of a result in Kersting, Schweinsberg and Wakolbinger ~\cite{Ke} on the approximation to the logarithm of the block counting processes in the dust case. To this end, let us quickly recall the Poisson point process construction of the $\Lambda$-coalescent given in \cite{Ke}, which is a slight variation of the  construction provided by Pitman in \cite{Pi}.

This construction requires $\Lambda(\{0\}) = 0$, which is fulfilled for coalescents with dust.  Consider  a Poisson point process $\Psi$  on $(0, \infty) \times (0, 1] \times [0, 1]^n$ with intensity $$dt \times p^{-2} \Lambda(dp) \times du_1 \times \dots \times du_n\ ,$$  and let $\Pi_n(0) = \{\{1\}, \dots, \{n\}\}$ be the partition of the integers $1, \dots, n$ into singletons.  Suppose $(t, p, u_1, \dots, u_n)$ is a point of $\Psi$, and $\Pi_n(t-)$ consists of the blocks $B_1, \dots, B_b$, ranked in order by their smallest element.  Then $\Pi_n(t)$ is obtained from $\Pi_n(t-)$ by merging together all of the blocks $B_i$ for which $u_i \leq p$ into a single block.  These are the only times that mergers occur.  This construction is well-defined because almost surely for any fixed $t' < \infty$, there are only finitely many points $(t, p, u_1, \dots, u_n)$ of $\Psi$ for which $t \leq t'$ and at least two of $u_1, \dots, u_n$ are less than or equal to $p$.  The resulting process $\Pi_n = (\Pi_n(t), t \geq 0)$ is the $\Lambda$-coalescent.  When $(t, p, u_1, \dots, u_n)$ is a point of $\Psi$, we say that a $p$-merger occurs at time $t$.  

Condition \eqref{dustcond} allows us to approximate the number of blocks in the $\Lambda$-coalescent by a subordinator.  Let $\phi: (0, \infty) \times (0, 1] \times [0, 1]^n \rightarrow (0, \infty) \times (0, \infty]$ be the function defined by $$\phi(t, p, u_1, \dots, u_n) = (t, -\log(1-p)).$$  Now $\phi(\Psi)$ is a Poisson point process, and we can define a pure jump subordinator $(S(t), t \geq 0)$ having the property that $S(0) = 0$ and, if $(t, x)$ is a point of $\phi(\Psi)$, then $S(t) = S(t-) + x$. With $\lambda$ the L\'evy measure of $S$, the formulas \eqref{defmu} and \eqref{sigma} now read
\[  \mu=\int_{[0,1]} \log \frac 1{1-p} \, \frac {\Lambda(dp)}{p^2} \ \text{ and }\ \sigma^2 = \int_{[0,1]} \Big(\log \frac 1{1-p}\Big)^2\, \frac{\Lambda(dp)}{p^2} \ .\]
 This subordinator first appeared in the work of Pitman \cite{Pi} and was used to approximate the block-counting process by  Gnedin et al. \cite{Gne} and M\"ohle \cite{Moe}; the benefits of a refined approximation by a {\em drifted} subordinator were discovered in \cite{Ke}. We recall that the drift appears because a merging of $b$ out of $N_n(t)$ lines results in a decrease by $b-1$ and not  by $b$ lines, see equation (23) in \cite{Ke} for an  explanation of the form of the drift. 
 The next result 
provides a refinement of Theorem 10 in \cite{Ke}. 

\begin{Prop}\label{Prop5} Let
\[ \int_{[0,1]} \frac{\Lambda(dp)}p < \infty \ , \]
let $f$ be as in \eqref{ourf}, and let $Y_n$ be the solution of \eqref{SDE} with $z:= \log n$. 
Then for any $\varepsilon >0$ there is an $\ell <\infty$ such that
\[ \mathbf P\big( \sup_{t<\tau_n}| \log N_n(t)-Y_n(t)| \le \ell \, , \, Y_n(\tau_n) < \ell \big) \ge 1-\varepsilon \ . \]
\end{Prop}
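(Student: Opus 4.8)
The plan is to couple the block-counting process $N_n$ with the drifted subordinator $Y_n$ driven by the \emph{same} Poisson point process $\Psi$, and to control their difference via a discrete-time analogue of the computation carried out for $Y^z$ versus $\rho^z$ in the proof of Proposition \ref{Prop4}. First I would record the exact dynamics of $\log N_n$ in terms of $\Psi$: a $p$-merger at time $t$ affecting $k\ge 2$ of the current $b$ blocks changes $N_n$ from $b$ to $b-k+1$, so $\log N_n$ jumps by $\log\frac{b-k+1}{b}$, whereas the matched jump of the subordinator $S_t$ is $-\log(1-p)$, so that $-S_t$ contributes $\log(1-p)$ to $Y_n$. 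The point is that $\log\frac{b-k+1}{b}$ and $\log(1-p)$ differ systematically, and the compensating drift $\int_0^t f(Y_n(s))\,ds$ in \eqref{SDE} is designed (see the discussion around equation (23) in \cite{Ke}) precisely to absorb the expected value of this discrepancy, since conditionally on a $p$-merger the number $k$ of affected blocks is (approximately) binomial with parameters $b$ and $p$.

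Next I would separate the difference $D_n(t):=\log N_n(t)-Y_n(t)$ into a martingale part and a drift part. Writing $D_n(t)$ as a sum over jumps of $\Psi$ up to time $t$ of the per-jump discrepancies between $\log\frac{N_n(s-)-k+1}{N_n(s-)}$ and $\log(1-p)$, minus the accumulated drift $\int_0^t f(Y_n(s))\,ds$, the centered increments form a martingale $M_n(t)$ whose predictable quadratic variation is controlled by the dust condition $\int_{[0,1]}\Lambda(dp)/p<\infty$ together with the boundedness of the relevant conditional variances. The residual drift discrepancy between the true compensator of $\log N_n$ and $\int_0^t f(Y_n(s))\,ds$ should be shown to be small; here the definition \eqref{ourf} of $f$ as $f(y)=\int_{[0,1]}\frac{1-(1-p)^{e^y}}{e^y}\,\frac{\Lambda(dp)}{p^2}$ must be matched term-by-term against the exact merger rates $\lambda_{b,k}$, using that $e^y\approx N_n(s)$ when $D_n$ is small. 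This gives a self-consistent bound showing $\sup_{t<\tau_n}|D_n(t)|=O_P(1)$, which strengthens Theorem 10 of \cite{Ke} by controlling the supremum rather than a fixed time.

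For the second event $\{Y_n(\tau_n)<\ell\}$, I would argue that absorption $N_n(\tau_n)=1$ forces $\log N_n(\tau_n)=0$, so on the high-probability event where $\sup_{t<\tau_n}|D_n(t)|\le \ell$ one gets $Y_n(\tau_n-)\le \ell$ immediately; I then have to rule out that $Y_n$ jumps back up substantially right at $\tau_n$. Since the jumps of $Y_n$ are downward (driven by $-S_t$) plus a bounded nondecreasing drift whose instantaneous rate $f(Y_n)$ is at most $\mu/2$, the upward drift accumulated over the single instant $\tau_n$ is zero, so $Y_n(\tau_n)\le Y_n(\tau_n-)\le \ell$ as well, possibly after enlarging $\ell$ by a fixed constant to absorb the last jump discrepancy.

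The main obstacle I expect is controlling the accumulation of many small jumps near the dustless boundary: when $\int_{[0,1]}\Lambda(dp)/p$ is finite but large, the number of mergers before absorption is of order $\log n$, and the per-jump discrepancies between $\log\frac{b-k+1}{b}$ and $\log(1-p)$, although individually small and mean-compensated by $f$, could in principle accumulate. The delicate point is thus to prove that the predictable quadratic variation of $M_n$ stays bounded uniformly in $n$ (not merely $O(\log n)$), which is what allows the supremum of $D_n$ to remain tight. This is exactly where the martingale maximal inequality and a careful second-moment estimate of the conditional variance of each $\log\frac{b-k+1}{b}$ increment — using $\operatorname{Var}(k)\le bp$ for the binomial $k$ — must be combined to yield a bound that does not grow with $n$.
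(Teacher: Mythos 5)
There is a genuine gap, and it sits exactly where the new content of this proposition lies. The first event, $\sup_{t<\tau_n}|\log N_n(t)-Y_n(t)|\le \ell$, is not what needs proving here: it is quoted verbatim from Theorem 10 of \cite{Ke}, so your martingale re-derivation, while plausible in outline, reproves a known result. The actual point of Proposition \ref{Prop5} is the additional event $\{Y_n(\tau_n)<\ell\}$, and your argument for it fails. You write that absorption forces $\log N_n(\tau_n)=0$ and hence $Y_n(\tau_n-)\le\ell$ on the good event. But the supremum is taken over $t<\tau_n$, so it only controls $|\log N_n(\tau_n-)-Y_n(\tau_n-)|$, and $N_n(\tau_n-)$ is the block count \emph{just before} the final merger. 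In a $\Lambda$-coalescent the last merger can collapse an arbitrarily large number of blocks into one in a single step, so $\log N_n(\tau_n-)$ --- and with it $Y_n(\tau_n-)$ --- is not bounded. Your subsequent observation that $Y_n(\tau_n)\le Y_n(\tau_n-)$ is then of no use, and the worry you flag (that $Y_n$ might jump \emph{up} at $\tau_n$) is not the real danger; the danger is that the downward jump of $Y_n$ at $\tau_n$ is too \emph{small} relative to $\log N_n(\tau_n-)$, leaving $Y_n(\tau_n)$ large.

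The missing idea is an estimate on the size $\Delta_n=-\log(1-p)$ of the subordinator jump at the final merger: one must show that with probability at least $1-\varepsilon/2$ one has $\Delta_n>\log N_n(\tau_n-)-c$ for a suitable constant $c$. The paper does this by noting that a $p$-merger occurring when there are $b$ blocks collapses \emph{all} of them only with probability $p^{b}$, so the rate of final mergers with $p\le 1-e^{c}/b$ is $\int_{[0,1-e^c/b]}p^{b}\,\Lambda(dp)/p^{2}$; bounding $p^{b}\le e^{2}/((1-p)b)$ and integrating the time spent at each block count via Lemma 14 of \cite{Ke} gives an expected total rate of order $e^{-c}$, hence small for large $c$. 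Combining this with the cited supremum bound yields $Y_n(\tau_n)=Y_n(\tau_n-)-\Delta_n<\log N_n(\tau_n-)+r-(\log N_n(\tau_n-)-c)=r+c$. Without an estimate of this kind your proposal does not establish the second half of the event, which is precisely the refinement over \cite{Ke} that the proposition provides.
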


\begin{proof} From \cite{Ke} we know that for given $\varepsilon >0$ there is an $r <\infty$ such that
\begin{align*}
\mathbf P\big( \sup_{t<\tau_n}| \log N_n(t)-Y_n(t)| \le r \big) \ge 1-\varepsilon/2 \  .
\end{align*}
Now we consider the size $\Delta_n$ of the last jump. Letting $(u_i,p_i)$, $i \ge 1$, be the points of the underlying Poisson point process with intensity measure $dt\, \Lambda(dp)/p^2$, the associated subordinator $S$ has jumps of size $v_i=-\log(1-p_i)$ at times $t_i$. Thus for any $c>0$ we have
\begin{align*}
\{ \Delta_n \le \log N_n(\tau_n-)-c\} &= \{ \tau_n=t_i \text{ and } -\log(1-p_i) \le \log N_n(t_i-)-c \text{ for some } i\ge 1\}\\
&= \Big\{ \tau_n=t_i \text{ and } p_i \le 1- \frac{e^c}{N_n(t_i-)} \text{ for some } i\ge 1\Big\}
\end{align*}
Given $N_n(t-)$ this event appears at time $t$ with rate
\[ \nu_{n,t}= \int_{[0,1-e^c/N_n(t-)]} p^{N_n(t-)} \frac{\Lambda(dp)}{p^2} \ .\]
Using the inequalities $p^b=(1-(1-p))^b\le e^{-(1-p)b} \le 1/((1-p)b)$ we get
\[ \nu_{n,t} \le \int_{[0,1-e^c/N_n(t-)]} e^{-(1-p)(N_n(t-)-2)} \, \Lambda(dp)\le \int_{[0,1-e^c/N_n(t-)]}\frac {e^2}{(1-p)N_n(t-)} \, \Lambda(dp) \ . \]
It follows
\begin{align*}
\mathbf E\Big[ \int_0^\infty \nu_{n,t} \, dt \Big] \le \mathbf E \Big[ \int_{[0,1]} \int_0^\infty \frac {e^2}{(1-p)N_n(t-)} I_{\{N_n(t-) \ge \lceil e^c/(1-p)\rceil \}} \, dt \, \Lambda(dp) \Big]
\end{align*}
 Lemma 14 of \cite{Ke} yields the estimate
\[\mathbf E \Big[  \int_0^\infty \frac {1}{N_n(t-)} I_{\{N_n(t-) \ge \lceil e^c/(1-p)\rceil  \}} \, dt\Big]  \le  c_1 \lceil e^c/(1-p)\rceil ^{-1}\le c_1 \frac{1-p}{e^c}\]
with some $c_1>0$, hence
\[ \mathbf E\Big[ \int_0^\infty \nu_{n,t} \, dt \Big] \le c_1 e^{2-c} \Lambda ([0,1])\ . \]
Therefore for $c$ sufficiently large
\[ \mathbf E\Big[ \int_0^\infty \nu_{n,t} \, dt \Big] \le \varepsilon/2 \ , \]
which implies
\[ \mathbf P\big(\Delta_n \le \log N_n(\tau_n-)-c\big) = 1-\exp \Big(- \mathbf E\Big[ \int_0^\infty \nu_{n,t} \, dt \Big]\Big) \le \varepsilon/2 \ .\]

Altogether we obtain
\[ \mathbf P\big(\sup_{t<\tau_n}| \log N_n(t)-Y_n(t)| \le r \, , \, \Delta_n > \log N_n(\tau_n-)-c\big) \ge 1-\varepsilon \ . \]
The event in the previous formula implies
\[ Y_n(\tau_n) = Y_n(\tau_n-)-\Delta_n < \log N_n(\tau_n-)+r -(\log N_n(\tau_n-) -c) = r+c \ ,\]
and the claim of the theorem follows with $\ell=r+c$. 
\end{proof}

\section{Proof of the main results}

\begin{proof}[Proof of Theorem \ref{LLN}]
Let us first assume that 
$\mu  < \infty$.
Then we have a coalescent with dust, and we may apply Proposition \ref{Prop5}.  Fix $\eta >0$. Note that on the event that $Y_n(\tau_n)<\ell$ the event $\tau_n < (1- \eta)\log n/\mu$ implies the inequality $T_\ell^{\log n} < (1-\eta) \log n/\mu$.  Thus in view of Proposition~\ref{Prop5}  there exists for any $\varepsilon >0$ an $\ell$ such that
\begin{align*}
\mathbf P( \tau_n < (1- \eta)\log n/\mu) \le  \mathbf P( T_\ell^{\log n} < (1-\eta) \log n/\mu)+ \varepsilon \ .
\end{align*}
Proposition \ref{Prop3} implies that the right-hand probability converges to 0 as $n \to \infty$. Letting $\varepsilon \to 0$ we obtain
\[ \lim_{n \to \infty} \mathbf P( \tau_n < (1- \eta)\log n/\mu) =0\ . \]

Also on the event $\sup_{t<\tau_n}| \log N_n(t)-Y_n(t)| \le \ell$, the event $\tau_n > (1+\eta)\log n/\mu $ implies $Y_n(t) \ge -\ell$ for all $t\le (1+\eta)\log n/\mu$, and consequently
\[ \mathbf P(\tau_n > (1+\eta)\log n/\mu) \le \mathbf P( T_{-\ell}^{\log n} > (1+\eta) \log n/\mu)+ \varepsilon \ . \]
Again the right-hand probability converges to zero in view of Proposition \ref{Prop3}, and we obtain
\[\lim_{n \to \infty} \mathbf P(\tau_n > (1+\eta)\log n/\mu) =0 \ . \]
Altogether our claim follows in the case $\mu < \infty$.

Now assume $\mu = \infty$.  If $\Lambda(\{0\})>0$, then the coalescent comes down from infinity and $\tau_n$ stays  bounded in probability. The same is true if $\Lambda(\{1\})>0$, thus we may assume that $\Lambda(\{0,1\})=0$.

For given $\varepsilon >0$ define the measure $\Lambda^\varepsilon$ by $\Lambda^\varepsilon(B):= \Lambda(B\cap [\varepsilon,1-\varepsilon])$. Obviously
\[ \mu^\varepsilon:=\int_0^1 \log \frac 1{1-p} \, \frac{\Lambda^\varepsilon(dp)}{p^2} < \infty\ . \]
Thus for the absorption times $\tau_n^\varepsilon$ of the $\Lambda^\varepsilon$-coalescent we have
\[  \frac {\tau^\varepsilon_n}{\log n} \to \frac 1{\mu^\varepsilon}\]
in probability as $n \to \infty$. Now we may couple the $\Lambda^\varepsilon$-coalescent in an obvious manner to the $\Lambda$-coalescent in such a way that  $N_n(t)\le N_n^\varepsilon(t)$ a.s. for all $t \ge 0$, in particular $\tau_n \le \tau_n^\varepsilon$. Hence it follows that 
\[ \mathbf P(\tau_n/\log n > 2 /\mu^\varepsilon) \to 0 \ . \]
Because of $\Lambda(\{0,1\})=0$ we have $\mu^\varepsilon \to \mu=\infty $ with $\varepsilon \to 0$, consequently
\[ \mathbf P(\tau_n/\log n > \eta) \to 0\]
for all $\eta >0$. This is our claim.
\end{proof}

\begin{proof}[Proof of Theorem \ref{CLT}]
Because of the condition $\mu<\infty$ we again may apply Proposition \ref{Prop5}.
We follow the same line as in the previous proof:
For $\varepsilon >0$ there exists an $\ell$ such that for all $a\in \mathbb R$
\begin{align*}
\mathbf P( \tau_n < b_n+ a \sqrt n) \le  \mathbf P( T_\ell^{\log n} < b_n+ a \sqrt n)+ \varepsilon
\end{align*}
and
\begin{align*}
\mathbf P(\tau_n > b_n+a\sqrt n) \le \mathbf P( T_{-\ell}^{\log n} > b_n+a\sqrt n)+ \varepsilon 
\end{align*}
Now apply Proposition \ref{Prop4} and let $\varepsilon \to 0$.
\end{proof}

\begin{proof}[Proof of Proposition \ref{Prop2}]
(i) Let us first assume \eqref{condi}. Because of $1-(1-p)^{1/r} \le \min (p/r,1)$ for  $0<r<1$ we have for $\alpha > 0$ 
\begin{align} f\big(\log \tfrac 1r\big) \le \int_0^{r^\alpha} \frac{\Lambda (dp)}p +  r\int_{r^\alpha}^1 \frac{ \Lambda(dp)}{p^2} \le \int_0^{r^\alpha} \frac{\Lambda (dp)}p + r^{1-\alpha }\int_0^1 \frac{\Lambda(dp)}{p}  \ .
\label{estimate1}
\end{align}
Also,  because of $1-(1-p)^{1/r} \ge 1- e^{-p/r} \ge  e^{-p/r}p/r$, it follows for $\beta >0$ that 
\begin{align} f\big(\log \tfrac 1r\big) \ge e^{-r^{\beta -1}}\int_0^{r^\beta} \frac{\Lambda (dp)}p\ . 
\label{estimate2}
\end{align}
Together with  \eqref{condi}   these two estimates yield for $\alpha < 1 < \beta$  
\[   c \beta^{-1/2} \le \liminf_{r \to 0} f\big(\log \tfrac 1r\big) \sqrt{\log \tfrac 1r} \le \limsup_{r \to 0} f\big(\log \tfrac 1r\big) \sqrt{\log \tfrac 1r} \le c \alpha^{-1/2}  \ .\]
Letting $\alpha, \beta \to 1$ we arrive at $f(y)= (c+o(1))/\sqrt y$ as $y \to \infty$ and consequently
\[ \int_0^{\log n} f(y) \, dy = (c+o(1))2\sqrt {\log n}   \]
as $n \to \infty$. 

Now, because of 
\begin{align*} 
 \frac{1}{\mu-f(y)}= \frac 1\mu + \frac {f(y)}{\mu(\mu-f(y))} 
\end{align*}
and $f(y)=o(1)$ as $y\to \infty$, we have 
\begin{align}
\label{formula} \int_\kappa^z \frac{dy}{\mu-f(y)} = \frac z\mu + \frac{1+o(1)}{\mu^2} \int_0^z f(y)\, dy +O(1)
\end{align}
as $z\to \infty$, and consequently, as claimed,
\begin{align*} 
b_n  = \frac{\log n}\mu + \frac{2c+o(1)}{\mu^2} \, \sqrt{\log n} \ .
\end{align*}

(ii) Now suppose that \eqref{condi2} is satisfied. Then in view of \eqref{formula} with $z=\log n$ it follows that
\[ \int_0^{\log n} f(y)\, dy = (2c+o(1)) \sqrt {\log n} \]
as $n \to \infty$, or equivalently
\[ \int_0^{z} f(y)\, dy = (2c+o(1)) \sqrt z \]
for $z \to \infty$. This implies that 
$f(z)= (c+o(1))/ \sqrt z$ as $z\to \infty$. For $c=0$ this claim follows because $f$ is decreasing, which entails
\[ zf(z) \le \int_0^z f(y)\, dy = o(\sqrt z) \ . \]
For $c>0$ we use the estimate
\[ \frac 1{\eta \sqrt z}\int_z^{(1+\eta) z} f(y)\, dy \le   \sqrt z f(z) \le \frac 1{\eta \sqrt z} \int_{(1-\eta)z}^z f(y) \, dy \]
 with $\eta >0$. Taking the limit $z\to \infty$ and then $\eta \to 0$ yields $f(z)= (c+o(1))/ \sqrt z$. Now, similar as in part (i) we get  from \eqref{estimate1} and \eqref{estimate2}  
\[c\sqrt \alpha \le \liminf_{r \to 0}\sqrt {\log \tfrac 1r}\int_{[0,r]} \frac{\Lambda(dp)}p\le  \limsup_{r \to 0}\sqrt {\log \tfrac 1r}\int_{[0,r]} \frac{\Lambda(dp)}p \le  c\sqrt \beta\ .\]
With $\alpha, \beta \to 1$ we arrive at \eqref{condi}.
\end{proof}

\paragraph{Acknowledgement.} It is our pleasure to dedicate this work to Peter Jagers.

\end{document}